\newtheorem{theorem}{Theorem}
\newtheorem{lemma}[theorem]{Lemma}
\newtheorem{cor}[theorem]{Corollary}
\newtheorem{prop}[theorem]{Proposition}
\theoremstyle{definition}
\newtheorem{example}[theorem]{Example}
\theoremstyle{remark}
\numberwithin{equation}{section}
\newcommand{\blankbox}[2]{%
\parbox{\columnwidth}{\centering
% Set fboxsep to 0 so that the actual size of the box will match the
% given measurements more closely.
%\setlength{\fboxsep}{0pt}%
%\fbox{\raisebox{0pt}[#2]{\hspace{#1}}}%
}%
}
\title{Dense 2-generator subsemigroups of $2\times 2$ matrices }
\author{ Mohammad Javaheri  \\
300 Summit Street\\
Department of Mathematics\\
 Trinity College \\ Hartford, CT 06106
\\ \small{Mohammad.Javaheri@trincoll.edu}  
}
\begin{document}

\maketitle
\begin{abstract}
We show that the semigroup of real linear fractional transformations on a proper subinterval of the real line does not admit any 2-generator dense subsemigroups, and then we construct a 3-parameter family of examples of 3-generator dense subsemigroups. We also construct an explicit example of a 2-generator dense subsemigroup of $2\times 2$ real matrices. In the complex case, we prove the existence of uncountably many 2-generator dense subsemigroups. 

\end{abstract}
\section{Introduction}

Let $\cal{F}$ denote the set of real linear fractional transformations from $(0,\infty)$ into $(0,\infty)$ i.e., maps of the form 
$$f(x)=\frac{ax+b}{cx+d}~;~~a,b,c,d \geq 0~\mbox{and}~ad-bc \neq 0.$$
The semigroup $\cal{F}$ (under the composition of functions) is isometric to the semigroup of $2 \times 2$ invertible matrices with nonnegative entries modulo (nonzero) scalar multiplication. A sequence $f_i \in {\cal{F}}$, $i \in \mathbb{N}$, is said to be convergent to $f \in {\cal {F}}$ if for every $x>0$, we have $f_i(x) \rightarrow f(x)$ as $i \rightarrow \infty$.

In \cite{MJ1}, we found all pairs in $\cal F$ that generate a topologically transitive semigroup on $(0,\infty)$ i.e., pairs $f,g \in {\cal F}$ so that the orbit of (almost) every $x>0$ under the action of the semigroup generated by $f$ and $g$ is dense in $(0,\infty)$. In this paper, we are interested in finding minimally generated dense subsemigroups of $\cal F$. It turns out that we need at least three elements in $\cal F$ to generate a dense subsemigroup of $\cal F$, and we present a three-parameter family of examples of 3-generator dense subsemigroups of $\cal F$ (see Theorem \ref{denselft}).

A subgroup of $SL(2,C)$, the group of complex $2\times 2$ matrices with determinant 1, is called elementary if the commutator of every two elements of infinite order in the subgroup has trace 2. Also, a subgroup of $SL(2,C)$ is called discrete if no sequence of distinct elements in the subgroup converges. J\o rgensen \cite{Jorg} studied the non-elementary subgroups of $SL(2,\mathbb{C})$, and showed that a non-elementary subgroup is discrete if and only if each of its subgroups generated by two elements is discrete. In the real case, a subgroup of $SL(2,\mathbb{R})$ is discrete if and only if each subgroup generated by one element is discrete. J\o rgensen also proved that every dense subgroup of $SL(2,\mathbb{R})$ has a dense subgroup generated by two elements. We asked the following question in \cite{MJ2} regarding the dense subsemigroups of $n \times n$ matrices.
\\
\\
\textbf{Problem}. \emph{What is the least number of generators that can generate a dense subsemigroup of the set of $n\times n$ matrices?}
\\
\\
The semigroup generated by only one matrix can never be dense or even have a dense orbit (this can be seen by looking at the Jordan normal form of the matrix; see \cite{K,Rol}). In this paper, we will answer this question for $n=2$: we construct an explicit example of a 2-generator dense subsemigroup of $2\times 2$ real matrices (see Example \ref{exr}). In the complex case, we prove the existence of uncountably many 2-generator dense subsemigroups (see Example \ref{exc2}). We also show that the semigroup of $2\times 2$ matrices with nonnegative entries does not have any 2-generator dense subsemigroup, however it has 3-generator dense subsemigroups. 

Although there is a lot of literature on dense subgroups in a variety of settings, dense subsemigroups, on the other hand, have rarely been studied. One might argue that it is more natural to seek dense subsemigroups instead of dense subgroups, since the semigroup structure is more natural than the group structure. One hopes that many of the results on dense subgroups can be improved to existence results on dense subsemigroups. Below, we mention two examples. 

X. Wang \cite{xia} has shown that every dense subgroup of the group of orientation preserving M\"obius transformations on $S^n$ has a dense subgroup that is generated by at most $n$ elements, $n\geq 2$. A similar statement about the group $U(n,1)$ was obtained by  W. Cao \cite{cao}. In both settings, one could ask for minimally generated dense subsemigroups.

Here is how this paper is organized. In Section 2, we study dense subsemigroups of $\cal F$. In Section 3, we will give a geometric description of the orbit of a given point in $(0,\infty)^2$ under the action of a pair of LFT's. In Sections 4 and 5, we study dense subsemigroups of $2\times 2$ matrices in real and complex cases. 

\section{Dense subsemigroups of $\cal F$}
In this section, we show that there are no 2-generator dense subsemigroups of $\cal F$, and then we construct a 3-parameter family of examples of 3-generator dense subsemigroups. The proper interval of reals under consideration is $(0,\infty)$; however, using a conjugation by a linear fractional map, the results in this section are valid on any proper subinterval of reals. Given a pair of functions $f,g \in {\cal F}$ and $x>0$, the orbit of $x$ under the action of $\langle f, g \rangle$ (the semigroup generated by $f$ and $g$) is given by
$$\left \{f^{\alpha_1}g^{\beta_1}\ldots f^{\alpha_k}g^{\beta_k}(x): \forall i\, \alpha_i,\beta_i\geq0, k \geq 0 \right \}.$$
The induced action of $f\in {\cal F}$ on $(0,\infty)^2$ is defined by 
$$f(x,y)=(f(x),f(y)).$$
We use the same character to denote $f \in {\cal F}$ and its induced action on $(0,\infty)^2$. The conjugation $\theta: (0,\infty) \rightarrow (0,1)$, defined by $\theta(x)=1/(x+1)$, gives a one-to-one correspondence between LFT's from $[0,1]$ to $[0,1]$ and $\cal F$. In particular if $\langle f, g \rangle$ is dense in $\cal F$, then $\langle \hat f, \hat g \rangle$ is also dense in the set of LFT's from $[0,1]$ to $[0,1]$, where $\hat f= \theta f \theta^{-1}$ and $\hat g=\theta g \theta^{-1}$.

We prove that there are no 2-generator dense subsemigroups in $\cal F$ by contradiction. Suppose that $\langle f, g\rangle$ is dense in $\cal F$. Then $\langle f, g \rangle$ must have dense orbits in $(0,\infty)$, and so, by the results in \cite{MJ1}, one of the following occurs (up to order and a conjugation by a map of the form $ux^v$ with $u>0$ and $v\in \{1,-1\}$).

\begin{itemize}
\item[I.]
\begin{itemize}  \item[i)]
$a,b\geq 1$, $c\geq 0$, $b>1$ if $c=0$, and
 $$f(x)={x \over {x+a}}~,~g(x)=bx+c~.$$
\item[ii)] $a,b>1$, $\ln a / \ln b$ is irrational, and
$$f(x)={x \over a}~,~g(x)=bx~.$$
\end{itemize}

\item[II.] \begin{itemize}
\item[i)] $0\leq c\leq 1$, $a>0$, $b \geq 1$, $b>1$ if $c=0$, and
$$f(x)={a \over {x+a}}~,~g(x)=bx+c~.$$

\item[ii)] $a,b\geq 1$ and
$$f(x)={a \over x}~,~g(x)=bx+1~.$$
\end{itemize}

\item[III.] $0\leq c \leq 1$, $a>0$, $b\geq 1$, $ab \leq 1$ if $c=0$, and 
$$f(x)={a \over {x+a}}~,~g(x)=c+{{ab} \over x}~.$$

\end{itemize}

In case I, $f$ and $g$ are both increasing, and so the entire semigroup $\langle f, g \rangle$ is comprised of increasing maps. In particular, the induced action of $\langle f, g\rangle$ on $(0,\infty)^2$ preserves the regions $\{(x,y): 0\leq x\leq y\}$ and $\{(x,y): x \geq y \geq 0\}$. It follows that in cases I, the action of $\langle f, g\rangle$ on $(0,\infty)^2$ has no dense orbits, and the semigroup $\langle f, g \rangle$ is not dense in $\cal F$. A similar argument eliminates case III.

Next, we eliminate the sub-case (ii) of case II, where $f(x)=a/x$ and $g(x)=bx+1$. By conjugating these maps with $\theta(x)=1/(x+1)$, we get $\hat f(x)=(1-x)/(ax-x+1)$ and $\hat g(x)=x/(2x-bx+b)$ on $[0,1]$. Since
$$\rm{Im}(\hat g)=[0,1/2]~,~\rm{Im}(\hat f \hat g)=[1/(a+1),1],$$
and $(\hat f)^2=Id$ (the identity map), for every $(x,y) \in [0,1]^2$ and $\hat h \in \langle \hat f, \hat g \rangle$ with $\hat h \neq \hat f, Id $, we have
$$|\hat h(x)-\hat h(y)| \leq  \frac{a}{a+1} \cdot$$
Hence, the open set $\{(x,y) \in [0,1]^2: |x-y|>a/(a+1)\}$ cannot contain more than one element of each orbit. However the orbit of the point $(0,1)$ is dense under the action of LFT's on $[0,1]$, hence the orbit of $(0,1)$ is dense under the action of any dense subsemigroup. Since $\langle f, g \rangle$ has no dense orbits, we conclude that it is not dense in $\cal F$. 

Next, suppose 
$$f(x)=\frac{a}{x+a}~\mbox{and}~g(x)=bx+c,$$
with $c \neq 0$. The conjugation by $\theta(x)=1/(x+1)$, gives the maps $\hat f=\theta f \theta^{-1}(x)=(ax-x+1)/(2ax-x+1)$ and $\hat g=\theta g \theta^{-1}(x)=x/(cx-bx+x+1)$, and we have 
$$\rm{Im}(\hat f)=[1/2,1]~\mbox{and}~\rm{Im}(\hat g)=[0,1/(c+1)].$$
It follows that for every $(x,y) \in [0,1]^2$ and $\hat h \in \langle \hat f, \hat g\rangle$, we have
$$|\hat h(x)-\hat h(y)| \leq \max \left ( \frac{1}{2}, \frac{1}{c+1} \right )\cdot$$
If $c \neq 0$, then the open set $\{(x,y) \in [0,1]^2: |x-y|>\max(1/2,1/(c+1))\}$ cannot contain more than one element of each orbit i.e. the orbits are not dense, and so $\langle f, g\rangle$ is not dense in $\cal F$ in this case either.

We study the remaining case of $f(x)=a/(x+a)$ and $g(x)=bx$ in more details below. By a conjugation ($x \rightarrow 1/x$), we have the pair
$$R(x)=1+\frac{a}{x}~\mbox{and}~S(x)=\frac{x}{b},$$
where $a>0$ and $b>1$. 

Let $\Lambda$ be the semigroup of real linear fractional transformations generated by $R$ and $S$, and let $\bar \Lambda$ be the closure of $\Lambda$. 
\begin{lemma}\label{first}
For every $(\alpha_1,\cdots,\alpha_{k+1}) \in \mathbb{Z}^{k+1}$, $k\geq 0$, the map
$$f_{\alpha}(x)=\frac{b^{\alpha_{k+1}}x}{(b^{\alpha_1}+\cdots+b^{\alpha_{k}})x/a+1}$$
belongs to $\bar \Lambda$. \end{lemma}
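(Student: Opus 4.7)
The plan is to factor $f_\alpha$ explicitly as a composition of a few ``building block'' maps, each of which we first put into $\bar\Lambda$. Identifying an LFT $(Ax+B)/(Cx+D)$ with the matrix $\bigl(\begin{smallmatrix}A&B\\C&D\end{smallmatrix}\bigr)$, the target $f_\alpha$ corresponds to $\bigl(\begin{smallmatrix}b^{\alpha_{k+1}}&0\\(b^{\alpha_1}+\cdots+b^{\alpha_k})/a&1\end{smallmatrix}\bigr)$. Introduce the dilations $\sigma_m(x)=b^mx$ and the ``Jordan blocks'' $T_n(x)=x/((b^n/a)x+1)$ for $m,n\in\mathbb{Z}$; a one-line matrix product gives $f_\alpha=\sigma_{\alpha_{k+1}}\,T_{\alpha_1}T_{\alpha_2}\cdots T_{\alpha_k}$ (with the empty $T$-product interpreted as the identity when $k=0$, which reduces to $f_\alpha=\sigma_{\alpha_1}$). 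It therefore suffices to show $\sigma_m,T_n\in\bar\Lambda$ for every $m,n\in\mathbb{Z}$.

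The decisive step is to place the involution $A(x)=a/x$ into $\bar\Lambda$: the direct computation
\[
S^nRS^n(x)=\tfrac{1}{b^n}+\tfrac{a}{x}\xrightarrow[n\to\infty]{}\tfrac{a}{x},
\]
uniform on compact subsets of $(0,\infty)$, does this in one line. The remaining building blocks are cheap consequences. The identity $AR(x)=a/(1+a/x)=ax/(x+a)=T_0(x)$ gives $T_0\in\bar\Lambda$; the identity $AS^mA(x)=a/(a/(b^mx))=b^mx$ combined with $S^m\in\Lambda$ yields $\sigma_m\in\bar\Lambda$ for every $m\in\mathbb{Z}$ (the case $m\le 0$ being immediate from $\sigma_{-m}=S^m$); and the conjugation $T_n=\sigma_{-n}\,T_0\,\sigma_n$ puts every $T_n$ into $\bar\Lambda$. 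Combining these ingredients with the factorization from the previous paragraph concludes the proof.

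The main technical obstacle is the tacit assumption that $\bar\Lambda$ is closed under composition, without which this ``assemble from limits'' strategy is illegitimate (indeed, naive triple products such as $S^nRS^{n+m}RS^n$ degenerate to constants). I would dispatch it by the standard coefficient normalization: each $f\in\mathcal{F}$ can be represented by a unique $(A,B,C,D)$ on the compact simplex $A+B+C+D=1$, $A,B,C,D\ge 0$, and pointwise convergence in $\mathcal{F}$ is equivalent to convergence of normalized coefficient vectors to a vector with nonzero determinant. Because the coefficients of $f\circ g$ are bilinear in those of $f$ and $g$, the relations $f_n\to f$ and $g_n\to g$ in $\mathcal{F}$ force $f_n\circ g_n\to f\circ g$, so $\bar\Lambda$ is indeed a subsemigroup and every intermediate step above is valid.
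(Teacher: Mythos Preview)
Your argument is correct, and it follows a genuinely different route from the paper's. The paper proceeds by induction on $k$: it first obtains the one-parameter family $f_l(x)=b^lx/(1+x/a)$ as the limit of $S^mRS^{m+l}R\in\Lambda$, deduces the pure dilations $b^{\alpha_1}x$ from $f_lS^{l-\alpha_1}$ as $l\to\infty$, and then pushes the inductive step through by composing $f_l$ with the inductive hypothesis. You instead put the involution $A(x)=a/x$ into $\bar\Lambda$ up front (a computation the paper carries out only \emph{after} this lemma), and from $A$, $R$, $S$ you assemble the dilations $\sigma_m$ and the unipotent blocks $T_n$, then read off $f_\alpha$ as the direct product $\sigma_{\alpha_{k+1}}T_{\alpha_1}\cdots T_{\alpha_k}$. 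Your approach is more structural and avoids the induction entirely; it also makes explicit the one point the paper uses tacitly, namely that $\bar\Lambda$ is closed under composition (the paper's inductive step $f_l\circ g$ with $f_l,g\in\bar\Lambda$ needs exactly this). The trade-off is that the paper's base-case limit stays inside $\Lambda$ until the very last step, whereas your assembly requires composing several limit objects and hence leans more heavily on the semigroup property of $\bar\Lambda$---which you justify cleanly via the normalized-coefficient argument.
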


\begin{proof}

Proof is by induction on $k$. For $k=0$, we need to show that for every $\alpha_1 \in \mathbb{Z}$, we have $b^{\alpha_1}x \in \bar \Lambda$. For positive integers $m$ and $n$, we have
$$S^mRS^nR(x)=\frac{b^{-m}(a+x+ab^{n}x)}{x+a}~.$$
Let $l$ be a fixed integer, and set $n=l+m$. Then as $m \rightarrow \infty$, we have $S^mRS^nR \rightarrow b^{l}x/(1+x/a)$, and so $f_{l}(x)=b^lx/(1+x/a) \in \bar \Lambda$ for all $l \in \mathbb{Z}$. Next, we let $l \rightarrow \infty$, to get $b^{\alpha_1}x=\lim f_lS^{l-\alpha_1}(x) \in \bar \Lambda$, which proves the basis of the induction. 

Now suppose the assertion of the lemma is true for $k\geq 0$, and let $(\alpha_1,\ldots, \alpha_{k+2}) \in \mathbb{Z}^{k+2}$. By the inductive hypothesis, $g(x)=b^{\alpha_{k+1}}x/(Ax+1) \in \bar \Lambda$, where $A=(b^{\alpha_1}+\ldots+b^{\alpha_k})/a$. For $l=\alpha_{k+2}-\alpha_{k+1}$, it follows that 
$$f_l g(x)=\frac{b^{\alpha_{k+2}}x}{(A+b^{\alpha_{k+1}}/a)x+1} \in \bar \Lambda,$$
and the inductive step is completed. \end{proof}

Given $s \geq 0$, there exists a sequence $\{\alpha_i\}_{i=1}^\infty$ os integers so that $sa=\sum_{i=1}^\infty b^{\alpha_i}$. It follows from Lemma \ref{first} that
\begin{equation}\label{tu}
T_s(x)=\frac{x}{sx+1}=\lim_{k \rightarrow \infty}\frac{x}{(b^{\alpha_1}+\ldots+b^{\alpha_k})x/a+1} \in \bar \Lambda.
\end{equation}
On the other hand,
$$S^{m}RS^{m}(x)=\frac{1}{b^m}+\frac{a}{x} \rightarrow \frac{a}{x},$$
as $m \rightarrow \infty$. Hence, $I(x)=a/x \in \bar \Lambda$. 
\begin{lemma}\label{imp}
Let $\alpha,\beta,\gamma \geq 0$, so that $0\leq \alpha - \beta \gamma \leq  \min(1,\alpha^2)$. Then 
$$F(\alpha,\beta,\gamma)(x)= \frac{\alpha x+\beta}{\gamma x + 1} \in \bar \Lambda.$$
\end{lemma}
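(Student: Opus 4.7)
The plan is to realize $F(\alpha,\beta,\gamma)$ explicitly as a composition of three maps already in $\bar\Lambda$, namely two translations and one $T_s$. From the paragraph preceding the lemma, $T_s(x)=x/(sx+1)$ lies in $\bar\Lambda$ for every $s\geq 0$ by \eqref{tu}, and $I(x)=a/x\in\bar\Lambda$. Composing these, $R_c=I\circ T_{c/a}$ and $A_c(x):=x+c=R_c\circ I$ lie in $\bar\Lambda$ for every $c\geq 0$. Consequently, every composition $A_c\circ T_s\circ A_{c'}$ with $c,s,c'\geq 0$ lies in $\bar\Lambda$.

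The first step is the direct computation
\[
A_c \circ T_s \circ A_{c'}(x) \;=\; \frac{(1+cs)\,x + c'(1+cs) + c}{sx + sc' + 1},
\]
which, after dividing numerator and denominator by $sc'+1$, equals $F(\alpha,\beta,\gamma)$ with
\[
\alpha=\frac{1+cs}{sc'+1}, \qquad \beta=\frac{c'(1+cs)+c}{sc'+1}, \qquad \gamma=\frac{s}{sc'+1},
\]
and determinant $\alpha-\beta\gamma=1/(sc'+1)^2$.

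The second step is to invert this parameterization. Given a target $(\alpha,\beta,\gamma)$ with $\delta:=\alpha-\beta\gamma>0$ and $\gamma>0$, solving the system yields
\[
s=\frac{\gamma}{\sqrt\delta}, \qquad c'=\frac{1-\sqrt\delta}{\gamma}, \qquad c=\frac{\alpha-\sqrt\delta}{\gamma},
\]
and the identity $\beta\gamma=\alpha-\delta$ automatically produces the correct value of $\beta$. The crux of the argument, and the step I expect to carry the weight, is the observation that the nonnegativity conditions $c'\geq 0$ and $c\geq 0$ are equivalent respectively to $\sqrt\delta\leq 1$ and $\sqrt\delta\leq\alpha$, i.e., $\delta\leq 1$ and $\delta\leq\alpha^2$ — precisely the hypothesis of the lemma. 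Thus, for any such $(\alpha,\beta,\gamma)$ one exhibits $F(\alpha,\beta,\gamma)=A_c\circ T_s\circ A_{c'}$ as an element of $\bar\Lambda$.

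What remains are the degenerate boundary cases. If $\delta=0$, then $F\equiv\beta$ is constant, which arises as the pointwise limit of $A_\beta\circ T_s$ as $s\to\infty$ and so lies in $\bar\Lambda$ by closedness. If $\gamma=0$, the hypothesis forces $\alpha\in\{0,1\}$, and $F$ is then either a constant or the translation $A_\beta$, both already in $\bar\Lambda$. These two observations cover the boundary of the parameter region and complete the plan.
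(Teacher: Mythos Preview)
Your proof is correct and follows essentially the same approach as the paper: build translations $A_c=I\circ T_{c/a}\circ I\in\bar\Lambda$ from the maps $T_s$ and $I$ already known to lie in $\bar\Lambda$, compose three such building blocks, and invert the resulting $3$-parameter system to see that solvability with nonnegative parameters is exactly the condition $\alpha-\beta\gamma\le\min(1,\alpha^2)$. The only cosmetic difference is the ordering of the composition---the paper uses $T_u\,A_v\,T_w$ (so the inversion divides by $\beta$), whereas you use $A_c\,T_s\,A_{c'}$ (dividing by $\gamma$)---which merely shifts which boundary case ($\beta=0$ vs.\ $\gamma=0$) needs separate treatment.
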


\begin{proof}
It is sufficient to consider the case where $\alpha,\beta,\gamma>0$ and $0<\alpha-\beta \gamma\leq \min(1,\alpha^2)$. For $u,v,w \geq 0$, it follows from \eqref{tu} that 
$$T_u I T_{v/a}IT_w(x)=\frac{(1+vw)x+v}{(u+w+uvw)x+1+uv} \in \bar \Lambda.$$
Now, given $\alpha,\beta,\gamma$, we set
\begin{equation}\label{defabg}
u=\frac{-\sqrt{d}+1}{\beta},v=\frac{\beta}{\sqrt{d}},w=\frac{-\sqrt{d}+\alpha}{\beta},
\end{equation}
where $d=\alpha-\beta \gamma$. The conditions given in the Lemma guarantee that $u,v,w \geq 0$. These choices of $u,v,w$ are made so that $T_uIT_{v/a}IT_w=F(\alpha,\beta,\gamma)$, and the proof is completed. 
\end{proof}

For $f(x)=(\alpha x+\beta)/(\gamma x+\delta) \in {\cal F}$ with $\delta \neq 0$, let
$${\rm{det}}(f)=\frac{1}{\delta^2}(\alpha \delta -\beta \gamma)~,~\sigma(f)=\frac{\alpha^2}{\delta^2}.$$
Let ${\cal F}^+=\{f \in {\cal F}: {\rm{det}}(f) \geq 0\}$, and for $k \in \mathbb{Z}$, let
\begin{equation}\label{defU}
U_k=\left \{f \in {\cal F}^+: \det(f)  \leq \min(b^{k},b^{-k}\sigma(f)) \right \}.
\end{equation}

\begin{theorem}\label{main}

$\bar \Lambda \cap {\cal F}^+=\bigcup_{k \in \mathbb{Z}}U_k.$

\end{theorem}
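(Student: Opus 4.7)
The plan is to prove the two inclusions of the theorem separately. For the forward inclusion $\bigcup_{k \in \mathbb{Z}} U_k \subseteq \bar\Lambda$, given $f \in U_k$ normalized so that $\delta(f) = 1$, I set $h(x) = b^{-k}f(x)$; direct substitution shows that the coefficients of $h$ satisfy the hypotheses of Lemma \ref{imp}, so $h \in \bar\Lambda$. Since $x \mapsto b^k x$ lies in $\bar\Lambda$ by the base step of Lemma \ref{first} and $\bar\Lambda$ is closed under composition, $f = (b^k\cdot) \circ h \in \bar\Lambda$.

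For the reverse inclusion $\bar\Lambda \cap {\cal F}^+ \subseteq \bigcup_k U_k$, I first observe that $\bigcup_k U_k$ is closed in ${\cal F}^+$: if $f_n \to f \in {\cal F}^+$ with $f_n \in U_{k_n}$, then either $(k_n)$ is bounded and a constant subsequence gives $f \in U_k$ (each $U_k$ is defined by closed conditions), or $|k_n| \to \infty$ and the $U_{k_n}$-inequalities force $\det(f) = 0$, putting $f$ in every $U_k$. For an approximating sequence $f_n \in \Lambda$ of an arbitrary $f \in \bar\Lambda \cap {\cal F}^+$, the case in which infinitely many $f_n$ sit outside ${\cal F}^+$ gives $\det(f_n) < 0$ and hence $\det(f) = 0$ by continuity, so again $f \in \bigcap_k U_k$. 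Thus the reverse inclusion reduces to showing $\Lambda \cap {\cal F}^+ \subseteq \bigcup_k U_k$.

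The crux is a combinatorial analysis of words. Write $f \in \Lambda \cap {\cal F}^+$ as $W = S^{q_0} R^{p_1} S^{q_1} \cdots R^{p_n} S^{q_n}$ with $p = \sum p_i$ even (so that the matrix determinant is $a^p b^q > 0$) and $q = \sum q_j$. List the individual $R$-positions in $W$ as $r_1 < r_2 < \cdots < r_p$, and let $s_j$ denote the number of $S$-letters between $r_j$ and $r_{j+1}$ (with $s_0, s_p$ the counts at the start and end of $W$), so that $q = \sum_{j=0}^p s_j$. The entries $\alpha = M(f)_{11}$ and $\delta = M(f)_{22}$ are sums of nonnegative weighted paths in the $2$-state graph in which $R$ allows transitions $1 \to 1, 1 \to 2, 2 \to 1$ with weights $1, a, 1$, and $S$ allows $1 \to 1, 2 \to 2$ with weights $1, b$. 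Exhibiting one path contributing to each entry yields
\[
\alpha \geq a^{p/2} b^{s_1 + s_3 + \cdots + s_{p-1}}, \qquad \delta \geq a^{p/2} b^{s_0 + s_2 + \cdots + s_p}.
\]
The first bound comes from pairing the $R$'s as $(r_1, r_2), (r_3, r_4), \ldots, (r_{p-1}, r_p)$ in a $1 \to 1$ path, each pair being an entry and exit between state $1$ and state $2$. The second is the $2 \to 2$ path in which $r_1$ is a forced exit, $r_p$ is an entry that holds state $2$ through the final $S$-letters, and the remaining $R$'s are paired as $(r_2, r_3), \ldots, (r_{p-2}, r_{p-1})$. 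Setting $k = (s_1 + s_3 + \cdots + s_{p-1}) - (s_0 + s_2 + \cdots + s_p)$, which is an integer because the two $b$-exponents partition $q$ exactly, these bounds rearrange into $\det(f) \leq b^k$ and $b^k \det(f) \leq \sigma(f)$, i.e., $f \in U_k$.

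The main obstacle is the combinatorial identification of these two specific paths and the verification that their $b$-exponents partition $q$. It is precisely this exact partition that forces $k$ to be an integer, even when the interval $[\log_b \det(f), \log_b(\sigma(f)/\det(f))]$ is too short to contain an integer for generic $f \in {\cal F}^+$.
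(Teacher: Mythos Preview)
Your argument is correct, and the forward inclusion is essentially the paper's (both reduce $U_k$ to Lemma~\ref{imp} via left-multiplication by $b^k x\in\bar\Lambda$). The reverse inclusion, however, is handled quite differently. The paper never analyzes words directly: it verifies by a short computation that $U=\bigcup_k U_k$ is closed under composition (indeed $f\in U_k,\ g\in U_l \Rightarrow fg\in U_{k+l}$), then observes that the building blocks $S^k$ and $RS^kR$ lie in $U$, so every even-$R$ word does. Your route instead lower-bounds the $(1,1)$ and $(2,2)$ entries of an arbitrary even-$R$ word by exhibiting two explicit paths in the transfer graph whose $b$-exponents partition $q$; this yields the integer $k$ directly and explains combinatorially \emph{why} such a $k$ exists, at the cost of a longer argument. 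The paper's approach has the advantage of revealing the semigroup structure of $U$ (and the additivity $U_k\cdot U_l\subseteq U_{k+l}$), while yours is self-contained and avoids the block factorization.

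One minor point: since every $f\in\mathcal F$ has nonzero determinant, $\det(f)>0$ on $\mathcal F^+$, so your ``$\det(f)=0$'' cases (both in the closedness argument and in the negative-determinant approximants) are in fact vacuous: an approximating sequence with $\det<0$ cannot converge to $f\in\mathcal F^+$, and an unbounded $(k_n)$ cannot occur. This does not affect the validity of your proof, but the case split can be streamlined.
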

\begin{proof}Since $U_1 \subseteq \bar \Lambda$ by Lemma \ref{imp} and $U_k=S^{-k}U_1$, we have $U=\bigcup_{k \in \mathbb{Z}}U_k \subseteq \bar \Lambda \cap {\cal F}^+$. Next, we show that $\bar \Lambda \cap {\cal F}^+ \subseteq U$. First we show that $U$ is a semigroup under composition. To see this, let $f\in U_k$ and $g\in U_l$ for some $k,l \in \mathbb{Z}$. Let $f(x)=(\alpha x+ \beta)/(\gamma x+1)$ and $g(x)=(ux+v)/(wx+1)$. Then
$$fg(x)=\frac{(\alpha u + \beta w)x+(\alpha v + \beta)}{(\gamma u+ w)x+(\gamma v+1)} \cdot$$
One verifies that 
$$0 \leq {\rm{det}}(fg)= \frac{(\alpha - \beta \gamma)(u-vw)}{(\gamma v+1)^2} \leq \min \left (b^{k+l}, b^{-k-l}\left ( \frac{\alpha u + \beta w}{\gamma v+1} \right )^2 \right ),$$
and so $fg \in U$ i.e., $U$ is a semigroup. 

Next, a simple calculation shows that $S^k(x)=F(b^{-k},0,0) \in U_1$ and $RS^kR=F(b^k+1/a,1,1/a) \in U_k$ for every $k\geq 0$. Now, every $f \in  \Lambda \cap {\cal F}^+$ can be factored into terms of the form $RS^kR$, and $S^k$, and since $U$ is a semigroup, it follows that $\Lambda \cap {\cal F}^+ \subseteq U$. Since $U$ is closed in $\cal F$, we conclude that $\bar \Lambda \cap {\cal F}^+ \subseteq U$, and the proof is completed.
\end{proof}

Theorem \ref{main} implies that there are no 2-generator dense subsemigroups of LFT's on $[0,\infty)$ (hence on any proper subinterval of reals), since $U$ does not include every $f \in {\cal F}^+$. In the next theorem we show that there are 3-generator dense subsemigroups. 

\begin{theorem}\label{denselft}
Let $a,c>0$ and $b> 1$ so that $\ln c/ \ln b \notin \mathbb{Q}$. Then the semigroup generated by $1+a/x, x/b$, and $x/c$ is dense in the set of LFT's on $[0,\infty)$. 
\end{theorem}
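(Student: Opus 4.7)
The plan is to bootstrap Theorem \ref{main} using the third generator $T(x)=x/c$. Let $\Lambda'=\langle R,S,T\rangle$ and let $\bar{\Lambda'}$ denote its pointwise closure, which is itself a semigroup under composition (since pointwise convergence of LFTs on an open interval is locally uniform away from poles, so $f_n\to f$ and $g_n\to g$ imply $f_n\circ g_n\to f\circ g$). The strategy proceeds in three steps: (i) use the irrationality of $\ln c/\ln b$ to place a dense set of scalings in $\bar{\Lambda'}$; (ii) combine these scalings with the region supplied by Lemma \ref{imp} to sweep out every increasing element of $\cal F$; (iii) use the involution $I(x)=a/x$ to cover the orientation-reversing part.

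For (i), Lemma \ref{first} applied to the pair $(R,S)$ already gives $x\mapsto b^{\ell}x\in\overline{\langle R,S\rangle}\subseteq\bar{\Lambda'}$ for every $\ell\in\mathbb{Z}$. Composing on the right with $T^m(x)=x/c^m\in\Lambda'$ for $m\geq 0$ yields the scaling $x\mapsto b^{\ell}c^{-m}x\in\bar{\Lambda'}$. Because $\ln c/\ln b\notin\mathbb{Q}$, Weyl's equidistribution theorem says that the residues $-m\ln c\pmod{\ln b}$ are equidistributed in $[0,\ln b)$ as $m$ ranges over $\mathbb{Z}_{\geq 0}$, so the set $\{\ell\ln b-m\ln c:\ell\in\mathbb{Z},\,m\geq 0\}$ is dense in $\mathbb{R}$. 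Exponentiating, every scaling $\sigma_\mu(x)=\mu x$ with $\mu>0$ lies in $\bar{\Lambda'}$.

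For (ii), fix any target $f_0=F(\alpha_0,\beta_0,\gamma_0)\in{\cal F}^+$ with $\alpha_0>0$ and attempt to write $f_0=\sigma_\mu\circ f$ with $f=F(\alpha_0/\mu,\beta_0/\mu,\gamma_0)$. The hypothesis of Lemma \ref{imp} on $f$ (namely $0\leq\alpha-\beta\gamma\leq\min(1,\alpha^2)$) reduces, after substitution, to $\mu\in[\det(f_0),\,\sigma(f_0)/\det(f_0)]$. This interval is nonempty exactly when $\det(f_0)^2\leq\sigma(f_0)$, which is equivalent to $|\alpha_0-\beta_0\gamma_0|\leq\alpha_0$, and this holds automatically in ${\cal F}^+$ (it follows from $0\leq\beta_0\gamma_0\leq\alpha_0$). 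Thus $f_0=\sigma_\mu\circ f\in\bar{\Lambda'}$ and ${\cal F}^+\subseteq\bar{\Lambda'}$. For (iii), recall that $I(x)=a/x\in\overline{\langle R,S\rangle}$ via the limit $S^m R S^m\to I$ noted before Lemma \ref{imp}, and $I\circ I=\mathrm{id}$. For any $g\in{\cal F}$ with $\det(g)<0$ (or with vanishing constant term in the denominator), a direct computation shows $g\circ I\in{\cal F}^+\subseteq\bar{\Lambda'}$, hence $g=(g\circ I)\circ I\in\bar{\Lambda'}$, completing the identification $\bar{\Lambda'}={\cal F}$.

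The hypothesis $\ln c/\ln b\notin\mathbb{Q}$ is used only in step (i) to promote the two discrete families $b^{\ell}\mathbb{Z}$ and $c^{-m}\mathbb{Z}_{\geq 0}$ into a dense set of scalings, and this equidistribution argument is the principal obstacle; the remaining steps are algebraic manipulations built on the structural pieces already provided by Lemmas \ref{first} and \ref{imp}.
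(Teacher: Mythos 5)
Your proposal is correct and mirrors the paper's own argument: both exploit the irrationality of $\ln c/\ln b$ to place a dense set of positive scalings in the closure, rescale the target $F(\alpha,\beta,\gamma)$ into the window where Lemma~\ref{imp} (equivalently, Theorem~\ref{main}) applies, and then cover the orientation-reversing part via the involution $I(x)=a/x$. The only difference is presentational: you first derive every $\sigma_\mu$ and then select $\mu$ from the interval $[\det(f_0),\sigma(f_0)/\det(f_0)]$, whereas the paper chooses sequences $k_i,l_i$ with $b^{k_i}c^{-l_i}\to\alpha$ and verifies membership in $U_{k_i}$ directly before multiplying back by $c^{-l_i}$.
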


\begin{proof}
Let $U$ be defined as in \eqref{defU}. Suppose that $\alpha,\beta, \gamma > 0$ so that $0 \leq \alpha - \beta \gamma$. Since $\ln c / \ln b \notin \mathbb{Q}$, it follows that there exist a sequence $\{k_i\}_{i=1}^\infty$ of integers and a sequence $\{l_i\}_{i=1}^\infty$ of positive integers so that $b^{k_i}c^{-l_i} \rightarrow \alpha$. Then, we have 
$$\lim_{i \rightarrow \infty} \min (b^{k_i}c^{-l_i}, b^{-k_i}c^{l_i}\alpha^2)=\alpha,$$ 
and so for $i$ large enough, we have
$$\alpha - \beta \gamma \leq \min (b^{k_i}c^{-l_i}, b^{-k_i}c^{l_i} \alpha^2),$$
which in turn implies that
$$0 \leq c^{l_i} \alpha - (c^{l_i} \beta )\gamma \leq \min (b^{k_i}, b^{-k_i}(c^{l_i}\alpha)^2).$$
By Theorem \ref{main}, we conclude that $F(c^{l_i}\alpha, c^{l_i}\beta, \gamma) \in \bar \Lambda$, and so $F(\alpha, \beta, \gamma) =c^{-l}F(c^{l_i}\alpha, c^{l_i}\beta, \gamma) \in \bar \Lambda$ as well. The case of $\beta=0$ or $\gamma=0$ follows by using a limiting process. 

By composing $F(\alpha, \beta, \gamma)$ with $a/x$, we deduce that $F(u,v,w) \in \bar \Lambda$ for all $u,w \geq 0$ and $v>0$. The case of $v=0$ can be dealt with by using another limiting process. \end{proof}

%%%%%%%%%%%%%%%%%%%%%%%%%%%%%%%%%%%%%%%%%%%%%%%%%%%%%%%%%%%%%%%%%%%%%%%

\section{Orbit closures}
In section 2, we showed that there are no 2-generator dense subsemigroups of $\cal F$. In this section, we study the induced action of the semigroup generated by $R(x)=1+a/x$ and $S(x)=x/b$ on $(0,\infty)^2$, and show that it has no dense orbits in $[0,\infty)^2$. On the other hand, the action of the conjugated maps $\hat R=\theta R \theta^{-1}$ and $\hat S=\theta S \theta^{-1}$, where $\theta=1/(x+1)$, on $[0,1]^2$ has dense orbits (where the only dense orbits are the orbits of $(x,y)$ with $x=0$ or $y=0$ except $(0,0)$ and $(1,1)$). 

The next theorem describes the orbit closure of $(x,y) \in (0,\infty)^2$ under the action of $\langle R,S \rangle$. It is more appropriate to give a geometric description of the orbit closures. Given a point $A=(x,y) \in (0,\infty)^2$, there exists a unique hyperbola tangential to the line $y=x$ at the origin that connects the origin to $A$. We denote this hyperbolic segment by $H(x,y)$. Also, we denote the infinite half-line in $(0,\infty)^2$ with slope 1 starting at $(x,y)$ by $L(x,y)$. Finally, let $\Omega(x,y)$ denote the closed region bounded by $H(x,y),L(x,y),H(a/x,a/y)$, and $L(a/x,a/y)$. If $x=y$, then this region degenerates to the half-line $y=x$, and so in this case we set $\Omega(x,x)=\{(t,t); t \geq 0\}$. In the sequel, $\bar \Lambda$ denotes the closure of the semigroup generated by $R(x)=1+a/x$ and $S(x)=x/b$, where $a>0$ and $b>1$. We begin with the following Lemma.

\begin{lemma}\label{step1}
Let $(x,y) \in (0,\infty)^2$. Then for every $(u,v) \in \Omega(x,y)$, there exists $f\in \bar \Lambda$ so that $f(x,y)=(u,v)$.
\end{lemma}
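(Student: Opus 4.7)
My plan is to cover $\Omega(x,y)$ using two two-parameter families of maps in $\bar\Lambda$, one on each side of the diagonal $\{u=v\}$. From the preceding discussion I already have $I(x)=a/x\in\bar\Lambda$ and $T_s(x)=x/(sx+1)\in\bar\Lambda$ for every $s\geq 0$; a direct check gives $IT_{t/a}I(x)=x+t$, so the translation $P_t(x)=x+t$ is in $\bar\Lambda$ for every $t\geq 0$. Since $\bar\Lambda$ is closed under composition, the maps $T_s\circ P_t$ and $T_s\circ P_t\circ I$ lie in $\bar\Lambda$ for all $s,t\geq 0$, and these are the two families I will use.

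The key computation is carried out in the reciprocal coordinates $(A,B)=(1/u,1/v)$ on $(0,\infty)^2$. Under this change of variables, $T_s$ becomes the slope-$1$ translation $(A,B)\mapsto(A+s,B+s)$, so
$$T_s\circ P_t(x,y)=\left(\frac{1}{x+t}+s,\ \frac{1}{y+t}+s\right)$$
in $(A,B)$-coordinates. For fixed $t$, varying $s\geq 0$ produces a slope-$1$ half-line starting at $(1/(x+t),1/(y+t))$; as $t$ varies over $[0,\infty)$, this base point sweeps out the arc from $(1/x,1/y)$ to the origin, which is the hyperbolic segment $H(1/x,1/y)$ read in $(A,B)$-coordinates. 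Hence the image in $(A,B)$-coordinates is the closed region bounded by $L(1/x,1/y)$ (the $t=0$ half-line), $H(1/x,1/y)$ (the $s=0$ arc), and the diagonal $\{A=B\}$ (the $t\to\infty$ limit). The involution $\phi(A,B)=(1/A,1/B)$ sends $H(c,d)\leftrightarrow L(1/c,1/d)$ and preserves the diagonal, so pulling back shows that $\{T_s\circ P_t(x,y):s,t\geq 0\}$ fills the component of $\Omega(x,y)$ bounded by $H(x,y)$, $L(x,y)$, and the diagonal, i.e., the half of $\Omega(x,y)$ lying on the same side of the diagonal as $(x,y)$.

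The same argument, with $(x,y)$ replaced by $I(x,y)=(a/x,a/y)$ on the opposite side of the diagonal, shows that the second family $\{T_s\circ P_t\circ I(x,y):s,t\geq 0\}$ covers the complementary half; here I rely on the fact that $I$ swaps $H(x,y)\leftrightarrow L(a/x,a/y)$ and $L(x,y)\leftrightarrow H(a/x,a/y)$, so that $\Omega(a/x,a/y)=\Omega(x,y)$. Together the two families reach every $(u,v)\in\Omega(x,y)$, supplying the required $f\in\bar\Lambda$. The degenerate case $x=y$, in which $\Omega(x,x)$ is by convention the diagonal ray, follows because $T_s$ and $P_t$ already act transitively on $\{(w,w):w>0\}$ from $(x,x)$.

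The main obstacle is the identification in the second paragraph: showing that the image of $[0,\infty)^2$ under $(t,s)\mapsto(1/(x+t)+s,\,1/(y+t)+s)$ is the entire stated region, not merely a subset. This reduces to a transparent solvability check: for $(A,B)$ with $0<A-B\leq 1/x-1/y$, the equation $(y-x)/((x+t)(y+t))=A-B$ determines a unique $t\geq 0$, and then $s:=A-1/(x+t)$ is nonnegative precisely when $(A,B)$ lies on the correct side of $H(1/x,1/y)$. Once this is in place, the boundary identifications under $\phi$ are routine, and the lemma follows.
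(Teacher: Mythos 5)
Your proof is correct, and it takes a genuinely different route from the paper's. The paper proceeds algebraically: it invokes Lemma~\ref{imp} with $\alpha=1$ to obtain the two-parameter family $f(x)=(x+\beta)/(\gamma x+1)\in\bar\Lambda$ for $\beta,\gamma\geq 0$, $\beta\gamma\leq 1$, then solves the two equations $f(x)=u$, $f(y)=v$ explicitly for $\beta,\gamma$ and verifies the constraints using the description \eqref{vineq} of $\Omega(x,y)$, the last and only real work being the factorization $(ux-vy)^2-(xy(v-u)+uv(x-y))(x-y-u+v)=(u-v)(v+x)(x-y)(u+y)$. You instead build a different two-parameter family $T_s\circ P_t$ (and its reflection $T_s\circ P_t\circ I$) directly from the already-established elements $T_s$, $I$, and $P_t=IT_{t/a}I$ of $\bar\Lambda$, and argue geometrically in reciprocal coordinates, where $T_s$ becomes a diagonal translation and the foot points $(1/(x+t),1/(y+t))$ trace out precisely the hyperbolic arc $H(1/x,1/y)$; the region then falls out as a union of slope-one rays emanating from that arc, and the involution $\phi$ carries everything back so that the two half-families cover the two halves of $\Omega(x,y)$ across the diagonal. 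Note that your family $T_s P_t$ is not the same as the paper's $F(1,\beta,\gamma)$ even as projective maps (yours has determinant $1$, the paper's has determinant $1-\beta\gamma$); both are valid two-parameter slices whose orbit of $(x,y)$ fills $\Omega(x,y)$. The trade-off: the paper's verification is shorter once the right factorization is spotted, while yours explains \emph{why} the image is exactly $\Omega(x,y)$ -- the hyperbola and ray of the sweep are literally the bounding curves $H$ and $L$ from the definition. One small wrinkle to fix before this is airtight: in your solvability check you write ``$0<A-B\leq 1/x-1/y$,'' which tacitly assumes $x<y$; you should state that you treat (say) the side of the diagonal containing $(x,y)$ by WLOG taking $x<y$, with the other side delegated to the $I$-conjugate family as you already do, so the sign conventions line up.
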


\begin{proof}
Since $\Omega(x,y)$ is invariant  under $I(x)=a/x \in \bar \Lambda$, without loss of generality, we assume that $x\geq y$ and $u\geq v$. If $x=y$ or $u=v$, the claim follows from the fact that the orbits of $\langle R,S \rangle$ on $[0,\infty)$ are all dense (and that $(0,0)$ belongs to every orbit closure). Thus, suppose that $x>y$ and $u> v$. Since $(u,v) \in \Omega(x,y)$, we have 
\begin{equation}\label{vineq}
u \geq v \geq \max \left ( u-x+y, \frac{uxy}{ux-uy+xy} \right ).
\end{equation}
It follows from Lemma \ref{imp} (by setting $\alpha=1$) that maps of the form $f(x)=(x+\beta)/(\gamma x+1)$ belong to $\Lambda$, where $\beta, \gamma \geq 0$ and $\beta \gamma < 1$. We choose $\beta$ and $\gamma$ so that $f(x)=u$ and $f(y)=v$. In fact, we need to have
$$\beta=\frac{xy(-u+v)+uv(x-y)}{ux-vy}~,~\gamma=\frac{x-y-u+v}{ux-vy}\cdot$$
The conditions $\beta,\gamma \geq 0$ follow directly from  \eqref{vineq}. The condition $\beta \gamma \leq 1$ is equivalent to 
$$(ux-vy)^2-(xy(-u+v)+uv(x-y))(x-y-u+v) \geq 0,$$
which can be factorized as $(u-v)(v+x)(x-y)(u+y)>0$, and the proof is completed. 
\end{proof}

\begin{theorem}\label{orbdes}
Let $R(x)=1+a/x$ and $S(x)=x/b$, where $a>0$ and $b>1$. Then for any $(x,y) \in (0,\infty)^2$, the orbit closure of $(x,y)$ under the action of $\langle R,S \rangle$ is given by 
$$\bigcup_{k\in \mathbb{Z}} \Omega(b^kx,b^ky).$$
\end{theorem}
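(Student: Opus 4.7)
The plan is to prove $\overline{\mathrm{orb}}(x,y) = W$, where $W := \bigcup_{k \in \mathbb{Z}} \Omega(b^k x, b^k y)$, by establishing both inclusions. For $W \subseteq \overline{\mathrm{orb}}(x,y)$, the basis case in the proof of Lemma \ref{first} supplies the scaling $t \mapsto b^k t \in \bar\Lambda$ for each $k \in \mathbb{Z}$, which places $(b^k x, b^k y)$ in the orbit closure of $(x,y)$. Applying Lemma \ref{step1} to the pair $(b^k x, b^k y)$ then writes every $(u,v) \in \Omega(b^k x, b^k y)$ as $g(b^k x, b^k y)$ for some $g \in \bar\Lambda$, and composition with the scaling puts $(u,v)$ in $\overline{\mathrm{orb}}(x,y)$.

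For the reverse inclusion I verify that $R$ and $S$ both carry $W$ into itself. I translate $(u,v)\in\Omega(p,q)$, for $p>q$, into explicit inequalities: the lower half $u\ge v$ is cut out by $u-v\le p-q$ and $1/v-1/u\le (p-q)/(pq)$, and the upper half $u\le v$ by $v-u\le a(p-q)/(pq)$ and $1/u-1/v\le (p-q)/a$. The identities $R(u)-R(v)=a(v-u)/(uv)$ and $1/R(v)-1/R(u)=a(v-u)/((u+a)(v+a))$, combined with the trivial bound $(u+a)(v+a)\ge a^2$, show that each of the two inequalities for one half of $\Omega(p,q)$ converts into the matching inequality for the other half, so $R(\Omega(p,q))\subseteq\Omega(p,q)$ for every positive pair $(p,q)$. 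Substituting $(u/b,v/b)$ into the same four inequalities shows that $S$ carries the lower half of $\Omega(b^k x,b^k y)$ into the lower half of $\Omega(b^{k-1}x,b^{k-1}y)$ and the upper half into the upper half of $\Omega(b^{k+1}x,b^{k+1}y)$; in either case the image stays inside $W$. Since $(x,y)\in\Omega(x,y)\subseteq W$ and $R,S$ preserve $W$, we get $\mathrm{orb}(x,y)\subseteq W$.

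It remains to check that $W$ is closed in $(0,\infty)^2$, so that the orbit inclusion upgrades to $\overline{\mathrm{orb}}(x,y)\subseteq W$. For $(u,v)$ off the diagonal, the inequalities above show that $(u,v)\in\Omega(b^k x, b^k y)$ is equivalent to $b^k$ lying in an explicit bounded interval whose endpoints are continuous functions of $(u,v)$. Given a convergent sequence $(u_n,v_n)\to (u,v)$ in $W$ with $u\ne v$, the associated intervals converge to a bounded limit interval, so the values $b^{k_n}$ lie eventually in a bounded set, which contains only finitely many integer powers of $b$; some $k$ therefore occurs infinitely often, and along that subsequence $(u_n,v_n)\in \Omega(b^k x, b^k y)$, whose closedness yields $(u,v)\in W$. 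When $u=v$ the limit lies on the diagonal, which is contained in every $\Omega(p,q)$ and hence in $W$. The main obstacle is the direct algebraic verification that $R$ preserves each $\Omega(p,q)$; the $S$-invariance and the closedness argument then fall out of the same inequality description.
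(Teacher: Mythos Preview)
Your proof is correct and follows the same two-inclusion strategy as the paper: Lemma~\ref{step1} plus the scalings $b^k x\in\bar\Lambda$ give $W\subseteq\overline{\mathrm{orb}}(x,y)$, and invariance of $W$ under $R,S$ gives the reverse inclusion. Two points of comparison are worth noting. First, for $R$-invariance the paper uses the factorisation $R=M\circ I$ with $M(x)=x+1$ and $I(x)=a/x$: by construction $I$ swaps the two halves of each $\Omega(p,q)$ (it exchanges $L(p,q)\leftrightarrow H(a/p,a/q)$ and $H(p,q)\leftrightarrow L(a/p,a/q)$), while the diagonal translation $M$ visibly preserves each half; so $R(\Omega(p,q))\subseteq\Omega(p,q)$ without computing anything. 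Your direct algebraic check via the four inequalities reaches the same conclusion but at greater cost. Second, you explicitly verify that $W$ is closed in $(0,\infty)^2$ via the bounded-interval argument for $b^k$; the paper passes over this point, so your write-up is in fact more complete here.
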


\begin{proof}
Lemma \ref{step1} and the fact that $b^kx \in \bar \Lambda$, for all $k\in \mathbb{Z}$, imply that the set $\bar \Omega=\cup_{k\in \mathbb{Z}}\Omega(b^kx,b^ky)$ is included in the orbit closure of $(x,y)$. To show that the orbit closure is included in $\bar \Omega$, it is sufficient to show that $\bar \Omega$ is invariant under $R$ and $S$. The set $\bar \Omega$ is clearly invariant under $S$. Moreover, we have $R(x)=1+a/x=M\circ I(x)$, where $M(x)=x+1$ and $I(x)=a/x$. Since $\bar \Omega$ is invariant under both $I$ and $M$, we see that it is invariant under $R$ as well, and the proof is completed. 
\end{proof}

Theorem \ref{orbdes} shows that the orbits of $\langle R,S \rangle$ on $(0,\infty)$ are never dense. However, if the interval is finite, dense orbits exist. To see this, we use the conjugation $\theta(x)=1/(x+1)$ to move to the interval $[0,1]$, and denote the conjugated maps by the hat notation. 

\begin{prop}
The orbit of $(x,y)$ under the action of the semigroup $\langle \hat R, \hat S \rangle$ is dense in $[0,1]^2$ if and only if $(x,y)$ belongs to the perimeter of the square $[0,1]^2$ except the vertices $(0,0)$ and $(1,1)$. 
\end{prop}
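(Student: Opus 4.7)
My plan is to transport the statement through $\theta$ back to $[0,\infty]^2$ and apply Theorem \ref{orbdes}. Since $\theta$ is a homeomorphism $[0,\infty]\to[0,1]$ intertwining $\langle R,S\rangle$ with $\langle\hat R,\hat S\rangle$, the proposition is equivalent to the assertion that the orbit of $(X,Y)\in[0,\infty]^2$ under $\langle R,S\rangle$ is dense in $[0,\infty]^2$ if and only if at least one of $X,Y$ lies in $\{0,\infty\}$ while $(X,Y)\notin\{(0,0),(\infty,\infty)\}$. I would prove the two implications separately.

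For necessity, if $(X,Y)\in(0,\infty)^2$, then by Theorem \ref{orbdes} the orbit closure equals $\bigcup_k\Omega(b^kX,b^kY)$. A direct inspection of the four defining curves of $\Omega(x,y)$ shows that $(u,v)\mapsto v/u$ takes values only in $[\min(y/x,x/y),\max(y/x,x/y)]$ on $\Omega(x,y)$: the function has no critical point in $(0,\infty)^2$, so extrema occur on the boundary curves, where an easy parameterization gives the claimed range. Since the scaling $(u,v)\mapsto(b^ku,b^kv)$ preserves ratios, the union is confined to a fixed proper sector (or, if $X=Y$, to the diagonal) and so cannot be dense. The excluded vertices follow by direct computation: $R(0,0)=(\infty,\infty)$, $R(\infty,\infty)=(1,1)$, and afterwards the two coordinates stay equal, so the orbit remains on the diagonal.

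For sufficiency, I would exploit the involution $I(x)=a/x\in\bar\Lambda$ (acting diagonally as $(u,v)\mapsto(a/u,a/v)$) together with the coordinate-swap symmetry of the action to reduce all four edges of the square to the single case $(X,Y)=(\infty,Y_0)$ with $Y_0\in[0,\infty)$. A direct computation yields $RS^n(\infty,Y_0)=(1,1+ab^n/Y_0)$ when $Y_0>0$, and $RS^nR(\infty,0)=(1+ab^n,1)$ when $Y_0=0$; in either case the orbit contains a sequence of interior points of the form $(1,\mu_n)$ or $(\mu_n,1)$ with $\mu_n\to\infty$. By Theorem \ref{orbdes}, the orbit closure of $(\infty,Y_0)$ then contains $\bigcup_k\Omega(b^k,b^k\mu_n)$ for every $n$, and it suffices to show that the resulting double union is all of $(0,\infty)^2$.

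The main obstacle I anticipate is this last claim. Unpacking the inequalities in the proof of Lemma \ref{step1}, the condition $(p,q)\in\Omega(b^k,b^k\mu_n)$ (say with $p<q$ and $\mu_n>1$) is equivalent to finding $k\in\mathbb{Z}$ with
\[
\frac{q-p}{\mu_n-1}\;\leq\; b^k\;\leq\;\frac{1-1/\mu_n}{1/p-1/q}.
\]
As $\mu_n\to\infty$ the left endpoint tends to $0$ and the right endpoint tends to $pq/(q-p)>0$, so for all sufficiently large $n$ the interval must contain an integer power of $b$; the case $p>q$ is symmetric, and $p=q$ is immediate because the diagonal lies inside every $\Omega(x,y)$. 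This shows that the orbit closure of $(\infty,Y_0)$ contains $(0,\infty)^2$; including boundary limits yields $[0,\infty]^2$, and transporting through $\theta$ produces density in $[0,1]^2$.
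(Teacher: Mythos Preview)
Your argument is correct and takes a genuinely different route from the paper's. For the necessity direction both approaches invoke Theorem \ref{orbdes}, and your observation that $v/u$ is confined to $[\min(y/x,x/y),\max(y/x,x/y)]$ on every $\Omega(b^kx,b^ky)$ is a clean way to see that interior orbits miss an open set. For sufficiency, however, the paper proceeds quite differently: it first reduces to the single point $(0,1)\in[0,1]^2$ (noting that $(0,1)$ lies in the orbit of every admissible boundary point) and then, working entirely in the conjugated picture, applies the explicit maps $\hat T_u$, $\hat R$, $\hat S$ in succession to build up the orbit closure step by step---first the edge $[0,1]\times\{1\}$, then $[0,1]\times\{0\}$, then the lower triangle $\{x\ge y\}$, then a half-square, and finally all of $[0,1]^2$.

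Your approach instead pushes one more application of Theorem \ref{orbdes}: from a boundary point you manufacture interior points $(1,\mu_n)$ (or $(\mu_n,1)$) with $\mu_n\to\infty$, and then let the already-computed interior orbit closures $\bigcup_k\Omega(b^k,b^k\mu_n)$ do the work. The displayed inequality you extract from Lemma \ref{step1} is exactly right (after the coordinate swap that identifies $\Omega(b^k,b^k\mu_n)$ with the situation $x>y$ of the lemma), and the limiting argument---left endpoint $\to 0$, right endpoint $\to pq/(q-p)>0$, so eventually a power of $b$ fits---is valid. One small wrinkle: in the $Y_0=0$ case you land at $(\mu_n,1)$ rather than $(1,\mu_n)$, so the relevant sets are $\Omega(b^k\mu_n,b^k)$; the analysis is symmetric, as you note, but the displayed inequality should be adjusted accordingly. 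The paper's argument is more hands-on and self-contained; yours is more structural, extracting the result directly from the geometry of the $\Omega$-regions and the fact that boundary points can reach arbitrarily extreme ratios.
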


\begin{proof} The claim that none of the orbits starting from an interior point are dense follows from Theorem \ref{orbdes}. The orbits starting from $(0,0)$ and $(1,1)$ are clearly not dense. Since the point $(0,1)$ belongs to the orbit of every point on the perimeter of $[0,1]^2$ except $(0,0)$ and $(1,1)$, it is sufficient to prove that the orbit of $(0,1)$ is dense. Let $\bar O$ denote the closure of the orbit of $(0,1)$ in $[0,1]^2$. Let $u \geq 0$ be arbitrary. By \eqref{tu}, after conjugation by $\theta$, we conclude that
$$\hat T_u(x)=\frac{u(1-x)+x}{(u+1)(1-x)+x}$$
belongs to the closure of $\langle \hat R, \hat S \rangle$. It follows that $(f(0),f(1))=(u/(u+1),1) \in \bar O$, which implies that the segment $[0,1] \times \{1\}$ is a subset of $\bar O$. By applying $\hat R=(1-x)/(2-2x+ax)$ to this segment, we obtain $[0,1/2] \times \{0\} \subseteq \bar O$. By applying $\hat S$ to the segment repeatedly, we get $[0,1] \times \{0\} \subseteq \bar O$. Now, for any $u\geq 0$, apply $\hat T_u$ to $[0,1] \times \{0\}$ , we conclude that the segment $[u/(u+1),1] \times \{u/(u+1)\}$ is in $\bar O$. It follows that $\Delta=\{(x,y) \in [0,1]^2: x\geq y\} \subseteq \bar O$. By applying $\hat R$ to $\Delta$, we get $\{ (x,y) \in [0,1]^2: y \leq 1/2\} \subseteq \bar O$, and by applying $\hat S$ repeatedly to this latter set, we conclude that $[0,1]^2 \subseteq \bar O$. 
\end{proof}

%%%%%%%%%%%%%%%%%%%%%%%%%%%%%%%%%%%%%%%%%%%%%%
\section{Dense subsemigroups of $2 \times 2$ matrices}
It immediately follows from Theorem \ref{main} that there are no 2-generator dense subsemigroups of $\cal F$. This, in turn, implies that there are no 2-generator dense subsemigroups of the set of $2\times 2$ matrices with nonnegative entries. In this section, we first show that there exist 3-generator subsemigroups of matrices with nonnegative entries. Recall that a real matrix is called unimodular if its determinant is $\pm 1$.

\begin{lemma}\label{thethree}
Let $a,c>0$ and $b>1$ so that $\ln c / \ln b \notin \mathbb{Q}$. Then the semigroup generated by the matrices
$$\begin{pmatrix}
   1/c   & 0   \\
   0   &  c
\end{pmatrix}, \begin{pmatrix}
    1/b  & 0   \\
    0  & b
\end{pmatrix}, \begin{pmatrix}
     1/a &  a  \\
   1/a   &  0
\end{pmatrix},$$
is dense in the semigroup of unimodular real matrices with nonnegative entries. \end{lemma}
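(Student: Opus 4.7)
The plan is to reduce to Theorem \ref{denselft} by passing between matrices and their induced LFTs. First, I would compute the LFT associated with each generator: the first matrix induces $x/c^{2}$, the second induces $x/b^{2}$, and the third induces
$$\frac{(1/a)x+a}{(1/a)x}=1+\frac{a^{2}}{x}.$$
Since $\ln c^{2}/\ln b^{2}=\ln c/\ln b\notin\mathbb{Q}$ and $b^{2}>1$, Theorem \ref{denselft} (applied with parameters $a^{2},b^{2},c^{2}$) tells me that the LFT semigroup generated by these three maps is dense in $\mathcal{F}$.

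Next, I would fix a target unimodular matrix $M$ with nonnegative entries and consider its LFT $f_{M}$. By the density above, I can pick a sequence of products $P_{n}$ of the three generators with $f_{P_{n}}\to f_{M}$ pointwise on $(0,\infty)$. Every such $P_{n}$ has nonnegative entries and $|\det P_{n}|=1$, with the sign of $\det P_{n}$ determined by the parity of the number of times the third generator appears. Since $f_{M}$ is strictly monotone, and an LFT is increasing precisely when its representative matrix has positive determinant, pointwise convergence forces $f_{P_{n}}$ to have the same monotonicity as $f_{M}$ for large $n$; equivalently, $\det P_{n}=\det M$ eventually.

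It remains to promote pointwise LFT convergence to matrix convergence inside the set $\Sigma$ of unimodular nonnegative matrices of a fixed determinant sign. I would do this by a compactness argument. Normalize $\tilde P_{n}=P_{n}/\|P_{n}\|$ so that $\|\tilde P_{n}\|=1$; by compactness some subsequence $\tilde P_{n_{k}}\to\tilde P$. Continuity of the map $N\mapsto f_{N}$ on matrices with positive norm gives $f_{\tilde P}=f_{M}$, so $\tilde P=\mu M$ for some $\mu>0$. Using $|\det \tilde P_{n}|=\|P_{n}\|^{-2}$ and $|\det \tilde P|=\mu^{2}|\det M|=\mu^{2}$, I recover $\|P_{n_{k}}\|\to 1/\mu$, hence $P_{n_{k}}=\|P_{n_{k}}\|\tilde P_{n_{k}}\to M$. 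Because every subsequence of $\{P_{n}\}$ has a sub-subsequence converging to $M$, the full sequence $P_{n}$ converges to $M$.

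The main obstacle is really step three: the translation from LFT-approximation to matrix-approximation. Theorem \ref{denselft} only gives density at the LFT level (which is a projective statement), while the lemma asks for density in an actual matrix semigroup. The two ingredients that close this gap are (i) the observation that the determinant sign is automatically matched because monotonicity is preserved under pointwise limits, and (ii) the compactness/normalization argument above, which together with unimodularity pins down the scalar multiple uniquely.
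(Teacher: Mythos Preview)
Your argument is correct and shares with the paper the essential first step: both pass to the induced LFTs and invoke Theorem~\ref{denselft}. Where the proofs diverge is in lifting LFT-density back to matrix-density.

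The paper does this by a short direct computation: writing the target $X=[y,z;s,t]$ (with $t\neq 0$) and the approximants $D_i=[\alpha_i,\beta_i;\gamma_i,\delta_i]$, LFT-convergence gives $\alpha_i/\delta_i\to y/t$, $\beta_i/\delta_i\to z/t$, $\gamma_i/\delta_i\to s/t$; since $\det D_i=\pm1$ and $\det X=1$, one gets $(\alpha_i\delta_i-\beta_i\gamma_i)/\delta_i^2\to 1/t^2$, which forces $\delta_i\to t$ and hence convergence of all four entries. No compactness is used, and the sign issue is handled implicitly (the ratio $\det D_i/\delta_i^2$ tends to a positive limit, so $\det D_i$ is eventually $+1$).

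Your route via normalization and sequential compactness is a genuine alternative. It is a bit more abstract and buys some robustness: the same template would work in other projective-versus-linear situations without rewriting the algebra. Two small points worth tightening: (1) your Step~3 is correct but actually unnecessary for Step~4, since the positivity of $\mu$ already follows from both $\tilde P$ and $M$ having nonnegative entries; (2) the phrase ``continuity of $N\mapsto f_N$ on matrices with positive norm'' needs the caveat that the bottom row of $\tilde P$ is nonzero---but this is forced, since otherwise $f_{P_{n_k}}(x)\to\infty$ while $f_M(x)$ is finite. With those remarks, your proof is complete.
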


\begin{proof}
Let us denote these three matrices by $A,B$, and $C$. Let $X=[y,z;s,t]$ be a $2\times 2$ matrix with nonnegative entries and $\rm{det}(X)=\pm 1$. Without loss of generality, we can assume $\rm{det}(X)=1$ and $t \neq 0$. By Theorem \ref{denselft}, for each $i \geq 1$, there exists a matrix$D_i=[\alpha_i, \beta_i; \gamma_i, \delta_i] \in \langle A,B,C \rangle$ so that 
\begin{equation}\label{alp}
\left |\frac{\alpha_i}{\delta_i}-\frac{y}{t}\right |+\left |\frac{\beta_i}{\delta_i}-\frac{z}{t} \right |+\left |\frac{\gamma_i}{\delta_i}-\frac{s}{t} \right |<\frac{1}{i} \cdot
\end{equation}
It follows that
$$\left | \frac{1}{\delta_i^2}-\frac{1}{t^2} \right |=\left | \frac{\alpha_i \delta_i-\beta_i \gamma_i}{\delta_i^2}-\frac{yt-zs}{t^2} \right|<\frac{\lambda}{i},$$
where $\lambda$ depends only on $X$. And so $\delta_i \rightarrow t$ as $i \rightarrow \infty$, and consequently, by \eqref{alp}, we have $\alpha_i \rightarrow y$, $\beta_i \rightarrow z$, and $\gamma_i \rightarrow s$. In other words, $D_i \rightarrow X$ as $i \rightarrow \infty$, and the claim follows.\end{proof}

\begin{cor}\label{cormat1}
Let $a>0$ and $b>1>c>0$ so that $\ln c / \ln b \notin \mathbb{Q}$. Then the semigroup generated by the matrices
\begin{equation}\label{thremat4}
\begin{pmatrix}
   c   & 0   \\
   0   &  1
\end{pmatrix}, \begin{pmatrix}
    1  & 0   \\
    0  & b
\end{pmatrix}, \begin{pmatrix}
     1 &  a  \\
   1   &  0
\end{pmatrix}
\end{equation}
is dense in the set of $2 \times 2$ matrices with nonnegative entries.

\end{cor}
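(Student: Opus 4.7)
The plan is to bootstrap Theorem \ref{denselft} (LFT density) to full matrix density by first building sufficiently many scalar matrices $\lambda\,\mathrm{Id}$ inside $\bar S := \overline{\langle A',B',C'\rangle}$, so as to absorb the projective ambiguity inherent in passing from LFTs to matrices. The key technical input is the irrationality of $\ln c/\ln b$, which will play a dual role: first for LFT density (via Theorem \ref{denselft}), and second for producing a dense set of scalar-matrix dilations.

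First, I would verify that the matrix $J := \begin{pmatrix}0 & a\\1 & 0\end{pmatrix}$, representing the LFT $a/x$, lies in $\bar S$. By the irrationality hypothesis I can choose integer sequences $p_n, q_n \geq 0$ with $p_n \to \infty$ and $c^{p_n} b^{q_n} \to 1$; then
$$ A'^{p_n}\,C'\,B'^{q_n} \;=\; \begin{pmatrix} c^{p_n} & a c^{p_n} b^{q_n}\\ 1 & 0 \end{pmatrix} \;\longrightarrow\; J.$$
The crucial observation is the one-line identity, for every $p,q \geq 0$ and with $D := A'^p B'^q = \mathrm{diag}(c^p, b^q)$,
$$ J\cdot D\cdot J\cdot D \;=\; a c^p b^q \cdot \mathrm{Id}. $$
Since $\bar S$ is a closed semigroup (closure of a semigroup under continuous matrix multiplication), this places $\lambda\,\mathrm{Id} \in \bar S$ for every $\lambda \in \{a c^p b^q : p, q \geq 0\}$, a set dense in $(0,\infty)$ by the irrationality of $\ln c/\ln b$. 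By closedness, $\lambda\,\mathrm{Id} \in \bar S$ for every $\lambda > 0$.

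Next I would upgrade LFT convergence to matrix convergence by imitating the proof of Lemma \ref{thethree}, with the scalar matrices above supplying the needed normalization. Theorem \ref{denselft} (applied with $c$ replaced by $1/c$, which preserves the irrationality hypothesis and matches the generator $A'$'s LFT $cx = x/(1/c)$) gives, for a target $X = [y,z;s,t]$ with $t \neq 0$ and $\det X \neq 0$, a sequence $Q_n = [\alpha_n, \beta_n; \gamma_n, \delta_n] \in \langle A',B',C'\rangle$ with $\alpha_n/\delta_n \to y/t$, $\beta_n/\delta_n \to z/t$, $\gamma_n/\delta_n \to s/t$; equivalently,
$$ Q_n \;=\; \frac{\delta_n}{t}\,X + o(\delta_n) $$
in matrix norm. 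Setting $\lambda_n := t/\delta_n$, the product $(\lambda_n\,\mathrm{Id}) \cdot Q_n \in \bar S \cdot S \subseteq \bar S$ equals $X + o(1)$, so $X \in \bar S$. The edge cases $t = 0$, $\det X = 0$, or $X = 0$ reduce to this one by approximating $X$ by $X + \epsilon\,\mathrm{Id}$, which for $\epsilon > 0$ has nonneg entries, strictly positive bottom-right, and (off a discrete set of $\epsilon$'s, since $\det(X+\epsilon\,\mathrm{Id})$ is quadratic in $\epsilon$) nonzero determinant, and then letting $\epsilon \to 0$.

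I expect the main obstacle to be spotting and justifying the scalar-matrix identity $JDJD = a c^p b^q\,\mathrm{Id}$: it is what provides the scale freedom required to promote projective convergence to norm convergence, and without it the LFT density theorem would not translate into matrix density. Once this identity (essentially a manifestation of $J$'s swap-and-scale symmetry on diagonal matrices) is in hand, the rest is bookkeeping around the fact that $\bar S$ is a closed semigroup.
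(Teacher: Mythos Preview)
Your proof is correct and follows essentially the same strategy as the paper's: obtain an off-diagonal matrix in the closure via a limit of $A'^{p}C'B'^{q}$, square (the paper) or conjugate through diagonals (your $JDJD$ identity) to get all positive scalar multiples of the identity, and then combine LFT density (Theorem~\ref{denselft}) with these scalars to upgrade projective convergence to matrix convergence. The only differences are cosmetic---the paper packages the last step as an appeal to Lemma~\ref{thethree} applied to the normalized semigroup $\hat{\mathcal S}$, whereas you inline that argument directly---and your handling of the degenerate cases $t=0$, $\det X=0$ is in fact more explicit than the paper's.
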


\begin{proof}Let $\cal S$ denote the closure of the semigroup generated by these three matrices. We first show that $dI_{2\times 2} \in {\cal S}$ for every $d\geq 0$. Choose sequences of positive integers $k_i,l_i$ so that $b^{k_i}c^{l_i} \rightarrow d/a$. Then 
\begin{equation}\nonumber
\begin{pmatrix}
    0 &  d  \\
    1  &  0
\end{pmatrix}
=\lim_{i \rightarrow \infty}\begin{pmatrix}
    c^{l_i}  & b^{k_i}c^{l_i}a   \\
    1  &  0
\end{pmatrix} =\lim_{i \rightarrow \infty}
\begin{pmatrix}
     c^{l_i} & 0   \\
   0   &  1
\end{pmatrix}\begin{pmatrix}
     1 &  a  \\
     1 &  0
\end{pmatrix}\begin{pmatrix}
     1 & 0   \\
     0 &  b^{k_i}
\end{pmatrix} \in {\cal S}, \end{equation}
and so $dI_{2\times 2} =[0,d;1,0]^2 \in {\cal S}$. Next, let $X$ be any $2\times 2$ matrix with nonnegative entries and $\mu=\rm{det}(X) \neq 0$. Let $\hat F=F/\sqrt{\rm{det}(F)}$ for an invertible matrix, and let $\hat {\cal S}=\{\hat F, F \in {\cal S}\}$. By Lemma \ref{thethree}, there exists $D_i \in \hat {\cal S}$ so that $D_i \rightarrow \hat X$ as $i \rightarrow \infty$. Choose $d_i$ so that $d_iD_i \in {\cal S}$. Since we showed that $(\sqrt{\mu}/d_i)I_{2 \times 2} \in {\cal S}$, we have 
$$X=\sqrt{\mu} \hat X=\lim_{i \rightarrow \infty} \frac{\sqrt{\mu}}{d_i}(d_iD_i) \in {\cal S},$$
and so $\cal S$ contains every $2 \times 2$ matrix with nonnegative entries.\end{proof}

The following corollary is an immediate consequence of Corollary \ref{cormat1}.
\begin{cor}\label{matcor}
Let $a>0$ and $b>1>c>0$ so that $\ln c / \ln b \notin \mathbb{Q}$. Then the semigroup generated by the matrices
\begin{equation}\label{thremat2}
\begin{pmatrix}
   -c   & 0   \\
   0   &  1
\end{pmatrix}, \begin{pmatrix}
    1  & 0   \\
    0  & -b
\end{pmatrix}, \begin{pmatrix}
     1 &  a  \\
   1   &  0
\end{pmatrix}
\end{equation}
is dense in the set of $2 \times 2$ real matrices.
\end{cor}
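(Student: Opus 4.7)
The plan is to mirror the three-step structure of the proof of Corollary~\ref{cormat1}, accounting for the negative entries of $A=\mathrm{diag}(-c,1)$ and $B=\mathrm{diag}(1,-b)$. Let $\bar S$ denote the closure of $S=\langle A,B,C\rangle$.

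First, I would apply Corollary~\ref{cormat1} to the triple $A^2=\mathrm{diag}(c^2,1)$, $B^2=\mathrm{diag}(1,b^2)$, $C$, which satisfies that corollary's hypotheses with parameters $(a,b^2,c^2)$ (since $b^2>1>c^2>0$ and $\ln c^2/\ln b^2=\ln c/\ln b\notin\mathbb{Q}$). Hence $\overline{\langle A^2,B^2,C\rangle}\subseteq\bar S$ is the full set of nonneg $2\times 2$ matrices, giving every nonneg matrix membership in $\bar S$.

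Second, I would strengthen the ``$dI$'' argument of Corollary~\ref{cormat1} to obtain $dI\in\bar S$ for every $d\in\mathbb{R}$. The product
\[
A^{l}CB^{k}=\begin{pmatrix} (-c)^{l} & a(-c)^{l}(-b)^{k} \\ 1 & 0 \end{pmatrix}
\]
tends to $\bigl(\begin{smallmatrix}0 & d \\ 1 & 0\end{smallmatrix}\bigr)$ whenever positive integers $l,k\to\infty$ with $(-1)^{l+k}c^lb^k\to d/a$. Such sequences exist for every $d\in\mathbb{R}$: the additive subgroup $\mathbb{Z}\ln c+\mathbb{Z}\ln b$ of $\mathbb{R}$ is dense by irrationality of $\ln c/\ln b$, and its index-two subgroup $\{m\ln c+n\ln b:m+n\text{ even}\}$ together with the odd coset are both dense, so either sign of $d$ is realizable by choosing $l_i,k_i$ of the appropriate parity. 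Squaring $\bigl(\begin{smallmatrix}0 & d \\ 1 & 0\end{smallmatrix}\bigr)$ then yields $dI\in\bar S$.

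Third, I would factor an arbitrary $X\in M_2(\mathbb{R})$ as a product of elements already known to lie in $\bar S$. For each of the eight sign patterns of $X$ satisfying $X_{11}X_{22}X_{12}X_{21}\geq 0$, use $X=LMR$ with $L,R$ signed-diagonal matrices assembled from $A,B,\pm I\in\bar S$ (for instance $\mathrm{diag}(1,-1)=(-I)\cdot\mathrm{diag}(1/c,1)\cdot A$, and similarly every $\mathrm{diag}(\pm\alpha_1,\pm\alpha_2)$ with $\alpha_i\geq 0$ lies in $\bar S$) and $M=|X|$ the nonneg matrix of entry magnitudes. For each of the remaining eight ``odd'' sign patterns, use $X=AM_1AM_2$ with $M_1,M_2$ nonneg: the sandwiched factor $AM_1A$ has mixed signs $(+,-,-,+)$, so each entry of $(AM_1A)M_2$ is of the form $\alpha-\beta$ with $\alpha,\beta\geq 0$, and the eight nonneg parameters of $M_1,M_2$ suffice to tune the four target entries independently — a direct parameter count (four equations in eight nonneg unknowns) confirms solvability for every target $X$ with odd sign pattern, and the case $\det X=0$ follows by approximation. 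The main obstacle is precisely this odd-pattern case: signed-diagonal factors alone preserve the parity $\mathrm{sign}(X_{11}X_{22}/X_{12}X_{21})$ of the nonneg base matrix, so reaching these patterns requires the non-diagonal sign-mixed factor $AM_1A$ (or, equivalently, a product involving $C$) interposed between the nonneg pieces.
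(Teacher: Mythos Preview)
The paper's own proof is a single sentence: ``immediate consequence of Corollary~\ref{cormat1}''. Your Step~1 is precisely the intended content --- $A^2=\mathrm{diag}(c^2,1)$, $B^2=\mathrm{diag}(1,b^2)$, $C$ satisfy the hypotheses of Corollary~\ref{cormat1} with parameters $(a,b^2,c^2)$, so every nonnegative $2\times 2$ matrix lies in $\bar S$. From there the quickest finish (and presumably what the author has in mind) is not your Steps~2--3 but an elementary-matrix argument: $\bar S$ contains the nonneg matrix $\mathrm{diag}(1/c,1)$, hence $A\cdot\mathrm{diag}(1/c,1)=\mathrm{diag}(-1,1)$, and similarly $\mathrm{diag}(1,-1)$; conjugating the nonneg shears $\bigl(\begin{smallmatrix}1&t\\0&1\end{smallmatrix}\bigr)$, $\bigl(\begin{smallmatrix}1&0\\t&1\end{smallmatrix}\bigr)$ ($t\ge 0$) by $\mathrm{diag}(-1,1)$ gives the same shears with $t\le 0$; so $\bar S$ contains all real elementary shears and all real diagonal matrices, hence all of $GL_2(\mathbb{R})$ by LU decomposition, hence $M_2(\mathbb{R})$ by closure.

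Your Step~2 is correct but superfluous once Step~1 is in hand: for $d<0$ one has $dI=A\,B\cdot\mathrm{diag}(|d|/c,\,|d|/b)\in\bar S$ directly.

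Your Step~3 has a genuine gap in the odd-sign-pattern case. ``Four equations in eight nonnegative unknowns'' is a dimension count, not a surjectivity proof; you still have to show that for \emph{every} target $X$ the system admits a nonnegative solution. This is not automatic. For example, with $X=\bigl(\begin{smallmatrix}-1&1\\-1&-1\end{smallmatrix}\bigr)$ (an odd pattern), writing $X=(AM_1A)M_2$ with $AM_1A=\bigl(\begin{smallmatrix}P&-Q\\-R&S\end{smallmatrix}\bigr)$ and $PS-QR>0$ forces the $(1,1)$ entry of $M_2$ to equal $(-S-Q)/(PS-QR)<0$; one must pass to the case $PS-QR<0$ and argue separately. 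The factorisation does go through after this case split, but it is more work than your sentence suggests, and the elementary-matrix route above avoids the issue entirely.
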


Now, we construct an explicit example of two $2\times 2$ matrices that generate a dense semigroup in the set of $2\times 2$ matrices in the real case:
\begin{example} \label{exr}
The semigroup of matrices generated by 
$$A=\begin{pmatrix}
    1  & 1/2   \\
      1 &  0
\end{pmatrix}~\mbox{and}~B=\begin{pmatrix}
     1 &  0  \\
     0 &  -8/3
\end{pmatrix},$$
is dense in the set of $2\times 2$ real matrices. 
\end{example}
One verifies that $ABA^3BA=[-2/9,0;0,1]=C$, and so $\langle A,B \rangle=\langle A,B,C \rangle$, which is dense in the set of $2\times 2$ real matrices by Corollary \ref{matcor}.

\section{The complex case}

In this section, we consider the set of $2\times 2$ complex matrices and  prove a result analogous to Corollary \ref{matcor} in the complex case. At the end of this section, we prove the existence of examples of 2-generator dense subsemigroups of $2\times2$ complex matrices. In the sequel $i=\sqrt{-1}$. 

\begin{cor} \label{matcomp}
Let $a,b,c, u\in \mathbb{C}$ such that the following conditions hold:
\begin{itemize}
\item[i)] $a,u \neq 0$, 
\item[ii)] $b=r\,i$ with $r>1>|c|$, and 
\item[iii)] the three numbers $1, \ln|c|/\ln|b|, \arg(c)/2\pi$ are rationally independent. 
\end{itemize}
Then  the semigroup generated by the matrices 
\begin{equation}\label{thremat}
C=\begin{pmatrix}
   c   & 0   \\
   0   &  1
\end{pmatrix}, B=\begin{pmatrix}
    1  & 0   \\
    0  & b
\end{pmatrix}, A=\begin{pmatrix}
     u &  a  \\
   1   &  0
\end{pmatrix}
\end{equation}
is dense in the set of $2 \times 2$ complex matrices. 
\end{cor}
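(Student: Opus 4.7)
The strategy is to adapt the real-case proof of Corollary \ref{cormat1}: first produce the scalar matrices $dI_2$ inside $\overline{\langle A,B,C\rangle}$ for every $d\in\mathbb{C}$, and then assemble an arbitrary $2\times 2$ complex matrix from explicit semigroup products. The only genuinely new ingredient is a complex analog of the density of $\{b^{k_i}c^{l_i}\}$ used in the real case.

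The first task is a two-dimensional Kronecker density lemma: under hypothesis (iii), the set $\{c^l b^k:l,k\in\mathbb{Z}_{\ge 0}\}$ is dense in $\mathbb{C}^*$. Taking logarithms reduces this to density in $\mathbb{R}\times(\mathbb{R}/2\pi\mathbb{Z})$ of the $\mathbb{Z}$-module generated by $(\ln|c|,\arg c)$ and $(\ln r,\pi/2)$. For the full-integer version, Pontryagin duality reduces the claim to the assertion that no continuous character $\chi(z)=|z|^{i\xi}e^{in\arg z}$ annihilates both $b$ and $c$; eliminating $\xi$ from the resulting pair of congruences
\[
\xi\ln r+n\pi/2\in 2\pi\mathbb{Z},\qquad \xi\ln|c|+n\arg c\in 2\pi\mathbb{Z}
\]
would yield a $\mathbb{Q}$-linear relation among $1,\ln|c|/\ln r,\arg c/(2\pi)$, contradicting (iii) unless $(\xi,n)=(0,0)$. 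Upgrading from $\mathbb{Z}^2$ to $\mathbb{Z}_{\ge 0}^2$ would use the opposite-sign first coordinates $\ln|c|<0<\ln r$ to produce pairs $(l',k')\in\mathbb{Z}_{\ge 1}^2$ with $c^{l'}b^{k'}$ arbitrarily close to $1$, so that any integer solution can be translated into the positive quadrant without changing its approximated value.

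Given this, for every $d\in\mathbb{C}$ I would pick $l_i,k_i\in\mathbb{Z}_{\ge 0}$ with $l_i\to\infty$ and $c^{l_i}b^{k_i}\to d/a$; since $|c|<1$ one automatically has $c^{l_i}\to 0$, so
\[
C^{l_i}AB^{k_i}=\begin{pmatrix} c^{l_i}u & c^{l_i}ab^{k_i} \\ 1 & 0 \end{pmatrix}\longrightarrow\begin{pmatrix} 0 & d \\ 1 & 0 \end{pmatrix}=:M_d\in\overline{\langle A,B,C\rangle}.
\]
The identity $M_{d_1}M_{d_2}=\mathrm{diag}(d_1,d_2)$ then places every diagonal complex matrix inside the closure.

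For arbitrary complex diagonal matrices $D_1,D_2$ now known to lie in the closure, the product $D_1AD_2=\begin{pmatrix}d_1ud_3 & d_1ad_4\\d_2d_3 & 0\end{pmatrix}$ realizes every matrix of the form $[p,q;r,0]$ with $p,q,r\in\mathbb{C}$ (using $a,u\neq 0$), and a further product
\[
\begin{pmatrix} p_1 & q_1 \\ r_1 & 0 \end{pmatrix}\begin{pmatrix} p_2 & q_2 \\ r_2 & 0 \end{pmatrix}=\begin{pmatrix} p_1p_2+q_1r_2 & p_1q_2 \\ r_1p_2 & r_1q_2 \end{pmatrix}
\]
can be tuned to any target with nonzero $(2,2)$-entry by first choosing $r_1,q_2,p_2,p_1$ to fix the second row and the $(1,2)$-entry, and then selecting $q_1,r_2$ to make the $(1,1)$-entry come out right; continuity sweeps up the case of vanishing $(2,2)$-entry, giving density in $M_2(\mathbb{C})$. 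The main obstacle is the semigroup-versus-group density in Step 1: transferring the Kronecker density from $\mathbb{Z}^2$ to $\mathbb{Z}_{\ge 0}^2$ is where the full joint irrationality in (iii) is required, together with the interaction between the contraction $|c|<1$ and the expansion $|b|>1$.
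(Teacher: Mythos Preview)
Your argument is correct and follows a genuinely different route from the paper's. The paper first shows that the two generators $A,B$ alone already act densely as M\"obius transformations: it adapts Lemma~\ref{first} and Lemma~\ref{imp} to the complex setting, exploiting the special form $b=ri$ to write every complex number as a convergent series $\sum_j b^{\alpha_j}$, and thereby obtains every map $(\alpha x+\beta)/(\gamma x+1)$ in the closure of $\langle R,S\rangle$; only afterwards does it invoke the multidimensional Kronecker theorem for $\{b^m c^n\}$ and rerun the scalar-adjustment step of Corollary~\ref{cormat1}. You bypass the M\"obius machinery entirely: once $\{c^l b^k:l,k\ge 0\}$ is shown to be dense in $\mathbb{C}^*$, the single product $C^{l_i}AB^{k_i}$ already yields every $M_d$, and two further explicit matrix products finish the job. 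Your approach is more elementary and self-contained, and it makes clear that the projective density of $\langle A,B\rangle$ is not actually needed for the corollary; the paper's approach, by contrast, delivers that stronger intermediate statement as a byproduct and keeps the parallel with the real case visible. One small point worth spelling out in your write-up: the requirement $l_i\to\infty$ (so that $c^{l_i}u\to 0$) does follow from your semigroup density, since for every $L$ the set $\{c^l b^k:l\ge L,\,k\ge 0\}=c^L\cdot\{c^l b^k:l,k\ge 0\}$ is again dense.
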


\begin{proof}
The argument presented in the proof of Lemma \ref{first} works in the complex case for $R(x)=u+a/x$ and $S(x)=x/b$, as long as $a,u \neq 0$ and $|b|>1$. For every complex number $s$ there exists a sequence $\{\alpha_j\}_{j=1}^\infty$ of integers so that $sa=\sum_{j=1}^k b^{\alpha_j}$. To see this, we note that every positive real number can be written as a series with terms of the form $b^{4k}$, $k \in \mathbb{Z}$, while every negative real number can be written as a series with the terms of the form $b^{2k}$, $k \in \mathbb{Z}$. Similarly, every purely imaginary number $t\,i$ can be written as a series with terms of the form $b^{4k+1}$, if $t>0$, or terms of the form $b^{4k+3}$ if $t<0$. It then follows from \ref{tu} that $T_s(x)=x/(sx+1) \in \bar \Lambda$ for all $s \in \mathbb{C}$. Now, the argument in Lemma \ref{imp} can be used to show that $(\alpha x + \beta)/(\gamma x+1) \in \bar \Lambda$ for every $\alpha, \beta, \gamma \in \mathbb{C}$ (since this time the equations \eqref{defabg} are always solvable if $\beta, \alpha-\beta \gamma \neq 0$. The cases where $\beta=0$ or $\alpha-\beta \gamma=0$ can be dealt with by taking limits). 

So far we have shown that the semigroup generated by $R(x)=u+a/x$ and $S(x)=x/b$ is dense in the set of M\"obius transformations (which is isometric to $SL(2,\mathbb{C})$). The argument in Corollary \ref{cormat1} can be used to show that the semigroup generated by $A,B$, and $C$ is dense, if we show that the set $\langle b,c \rangle =\{b^mc^n: m,n \in \mathbb{N}\}$ is dense in $\mathbb{C}$. Let $z$ be an arbitrary nonzero complex number. It follows from condition (iii) and the multidimensional Kronecker's approximation Theorem \cite[\S 23.6]{2kr} that for any $\epsilon>0$ there exist positive integers $m,n$ and an integer $L$ so that
\begin{eqnarray}\label{kr1}
\left |n\left ( \frac{\arg(c)}{2\pi} \right )-\left (\frac{\arg(z)}{2\pi} \right )+L \right |&<&\epsilon,\\ \label{kr2}
\left | n \left (\frac{\ln |c|}{\ln |b|^4} \right )-\left (\frac{\ln|z|}{\ln |b|^4}\right )+m \right |&<&\epsilon.
\end{eqnarray}
It follows from the inequalities \eqref{kr1} and \eqref{kr2} that $|\ln |c^nb^{4m}| - \ln |z||<\epsilon |b|^{4}$ and $|\arg(c^nb^{4m})-\arg(z)+2\pi L|<2\pi \epsilon$. Since $\epsilon$ was arbitrary, we conclude that $\langle b,c \rangle$ is dense in $\mathbb{C}$, and the proof is completed.\end{proof}

We are now ready to prove the existence of examples of 2-generator dense subsemigroups of complex $2\times 2$ matrices. Recall that a set $F$ is called \emph{cocountable} in $E$ if $E \backslash F$ is countable. 

\begin{example}\label{exc2}
For $r> 3$, let 
\begin{eqnarray}
b&=& r\,i, \\
u&=&-\left( \frac{1}{2b} \right)^{1/5} \left (8+2b+b^2+4\sqrt{4+b^2}+b\sqrt{4+b^2} \right )^{1/5},\\
a&=& u^2(-2-b+\sqrt{4+b^2})/(2b),\\
c&=& \frac{1}{2}\left ( 2+b^2+\sqrt{4+b^2} -b-b\sqrt{4+b^2} \right ).
\end{eqnarray}
Then there exists a cocountable subset $F \subseteq (3,\infty)$ so that, for every $r \in F$, the semigroup generated by the matrices $A=[u,a;1,0]$ and $B=[1,0;0,b]$ is dense in the set of $2\times 2$ complex matrices. 
\\
\\
\emph{Proof}. We have selected $a,b,c,$ and $u$ so that $ABA^3BA=C$ i.e., $\langle A,B,C \rangle=\langle A,B \rangle$. Thus, we only need to verify the conditions of Corollary \ref{matcomp} for $a,b,c$, and $u$. Clearly $a,u \neq 0$ and $|b|=r>1$. By direct computation, we have
$$|c|^2=\frac{1}{2}(r^4-3r^2+r\sqrt{r^2-4}-r^3 \sqrt{r^2-4}) \in (0,1),$$
for all $r>3$. 

Now, $f(r)=\arg(c)/2\pi$ and $g(r)=\ln|c|/\ln|b|$ are both analytic functions of $r \in (3,\infty)$. Let $\cal H$ denote the set of $r>3$ so that $1,f(r),g(r)$ are rationally dependent. We need to show that $\cal H$ is a countable set. On the contrary, suppose that $\cal H$ is uncountable. For each $r \in {\cal H}$, there exists a triplet of integers $(A(r),B(r),C(r)) \neq (0,0,0)$ so that
$$A(r)+B(r)f(r)+C(r)g(r)=0.$$
The function $r \mapsto (A(r),B(r),C(r))$ maps the uncountable set $\cal H$ to the countable set $\mathbb{Z}^3\backslash \{(0,0,0)\}$. It follows that there exist uncountably many values of $r$ that are mapped to the same triplet $(A,B,C)\neq (0,0,0)$, and so the equation
$$H(r)=A+Bf(r)+Cg(r)=0,$$
has uncountably many solutions for $r>3$. Since $f$ and $g$ are analytic functions of $r$, it follows that $H$ is an analytic function of $r$, and so $H(r) \equiv 0$ for all $r>3$. On the other hand, as $r \rightarrow \infty$, one shows that $f(r) \rightarrow 1/4$ and $g(r) \rightarrow -1$, and so $A+B/4-C=0$. If $C \neq 0$, then $h(r)=(g(r)+1)/(1/4-f(r))=B/C$, but it is easy to check that $h(r)$ is not constant (or alternatively check that $\lim_{r \rightarrow \infty} h(r)=0$, which gives $B=0$, and then because $g$ is not a constant function, it follows that $A=C=0$). Hence $C=0$, which in turn implies that $A=B=0$, since $f$ is not a constant function either. This is a contradiction, and the proof is completed. \hfill $\square$

\end{example}

\end{document}